\lstdefinestyle{mystyle}{
    basicstyle=\ttfamily\footnotesize,
    breakatwhitespace=false,         
    breaklines=true,                 
    captionpos=b,                    
    keepspaces=true,                 
    showspaces=false,                
    showstringspaces=false,
    showtabs=false,                  
    tabsize=2
}
\newtheorem{remark}[theorem]{Remark}
\newcommand{\TheTitle}{%
A reduced conjugate gradient basis method for fractional diffusion
}
\newcommand{\TheShortTitle}{%
RCGBM for fractional Laplacian
}
\newcommand{\TheAuthor}{%
  Y. Li, L. Zikatanov, C. Zuo
}
\newcommand{\TheFunding}{The work of Li is partially supported by the Fundamental Research Funds for the Central Universities 226-2023-00039. 
The work of Zikatanov and Zuo is supported in part by NSF DMS-2208249. Zikatanov also acknowledges the support of the U.~S.-Norway Fulbright Foundation.  
}
\headers{\TheShortTitle}{\TheAuthor}
\title{\TheTitle
\thanks{\TheFunding}}
\author{Yuwen Li\thanks{School of Mathematical Sciences, Zhejiang University, Hangzhou, Zhejiang 310058, China 
  (\email{liyuwen@zju.edu.cn}).}
\and Ludmil T. Zikatanov\footnotemark[3]
\and Cheng Zuo\thanks{Department of Mathematics, The Pennsylvania State University,
University Park, PA 16802, USA
  (\email{ltz1@psu.edu}, \email{cxz5180@psu.edu}).}
  }
\begin{document}

\maketitle

%\begin{center}
%In collaboration with:
%  {\TheCollaborators}
%\end{center}
%\vspace{1cm}
% ---------------------------------------------
% ---------------------------------------------

\begin{abstract}
This work is on a fast and accurate reduced basis method for solving discretized fractional elliptic partial differential equations (PDEs) of the form $\mathcal{A}^su=f$ by rational approximation. 
A direct computation of the action of such an approximation would require solving multiple (20$\sim$30) large-scale sparse linear systems. Our method constructs the reduced basis using the first few directions obtained from the preconditioned conjugate gradient method applied to one of the linear systems. As shown in the theory and experiments, only a small number of directions (5$\sim$10) are needed to approximately solve all large-scale systems on the reduced basis subspace. This reduces the computational cost dramatically because: (1) We only use one of the large-scale problems to construct the basis; and (2) all large-scale problems restricted to the subspace have much smaller sizes. We test our algorithms for fractional PDEs on a 3d Euclidean domain, a 2d surface, and random combinatorial graphs.  We also use a novel approach to construct the rational approximation for the fractional power function by the  orthogonal greedy algorithm (OGA). 
\end{abstract}

\begin{keywords}
  % 7. Keywords that describe the paper
  Fractional PDE, Rational Approximation, Orthogonal Greedy Algorithm, Reduced Basis Method, Conjugate Gradient method, Preconditioning
\end{keywords}

\section{Fractional Diffusion: introduction and preliminaries}\label{sec:intro} Fractional order operators have received extensive attention and have led to many applications in stochastic processes arising from quantum mechanics, biology, economics, see, e.g., \cite{BucurValdinoci2016} and the references therein. 
% Popular applications involve elliptic operators such as  Laplacians over Euclidean domains, Laplace-Beltrami operators on curved surfaces, or weighted grapph Laplacians on combinatorial graphs. 
Recently, fractional operators have also been used to develop parameter-robust preconditioners for complex multi-physics systems (cf.~\cite{BoonKochKuchtaMardal2022,BudisaHu2023}).
     Our focus is on efficient numerical solution of fractional problems of the form: 
      \begin{equation}\label{Asuf}
          \mathcal{A}^su = f, \quad s\in(0,1), 
      \end{equation}
      where $\mathcal{A}$ is a Symmetric and Positive Definite (SPD) operator, $f$ is given and $u$ is the solution. 
      In practice, $\mathcal{A}$ could be an elliptic differential operator on a Euclidean domain, the Laplace-Beltrami operator on a surface, or a weighted graph Laplacian. On bounded domains there are several non-equivalent interpretations for operators of fractional order (cf.~\cite{BonitoPasciak2015,DeliaDuGlusaGunzburgerTianZhou2020,Borthagaray2021}). Our approach in this work utilizes the description for $\mathcal{A}^s$ in terms of the spectral decomposition of $\mathcal{A}$.

      The difficulty in solving \eqref{Asuf} is the  non-local nature of $\mathcal{A}^s$, which results in a dense differential matrix by direct discretizations (cf.~\cite{CelikDuman2012,ZhaoSunHao2014}). 
      As it turns out in \cite{BonitoPasciak2015,HarizanovLazarovMargenovMarinovVutov2018,hofreither2020unified}, etc., rational approximation of $\varphi(z)=z^{-s}$ can reduce the fractional problem \eqref{Asuf} to a series of well-studied SPD problems allowing sparse discretizations. Roughly speaking, the rationale of this idea involves the following two steps: (1) finding the (nearly) best rational approximation $r_n(z)\approx z^{-s}$ with negative poles $\{-t_i\}_{i=1}^n$; (2) construct an efficient solver for the $n$ discrete problems:
\begin{equation}\label{Ahtuf}
          (\mathcal{A}_h+t_i\mathcal{I}_h)u_h(t_i) = f_h, \quad 1\leq i\leq n,
      \end{equation}
where $\mathcal{A}_h$ is a discretization of $\mathcal{A}$, $\mathcal{I}_h$ is the identity, and $u_h(t_i)$ is the numerical (by finite element or finite difference) solution, also known as a snapshot  at $t_i$, in a finite-dimensional space $\mathcal{H}_h$. 
In practice, an accurate rational approximation for $z^{-s}$ has about $n=20\sim30$ poles and thus \eqref{Ahtuf} needs to be solved for at least $20\sim30$ times.  

The main results in this paper are on employing a special reduced basis method to perform the second step by reducing all systems in \eqref{Ahtuf} to a subspace $\mathcal{H}_h^m\subset\mathcal{H}_h$ of dimension $5\sim10$. The savings in computational time are significant as accurate approximation of \eqref{Asuf} leads to $n$ large-scale linear systems in~\eqref{Ahtuf}, typically with $10^6\sim10^9$ unknowns. A key feature of our method is that the cost of constructing $\mathcal{H}_h^m$ is the same as solving only one system of the type \eqref{Ahtuf} by the Preconditioned Conjugate Gradient (PCG). Later in this section, we give more details on the proposed reduced basis approach.  

\subsection{Orthogonal Greedy Algorithm for rational approximation}
% In this paper we propose a novel Orthogonal Greedy Algorithm (OGA) for computing the poles and residues of the rational approximation to $z^{-s}$. 
The pioneering work by Newman~\cite{1964NewmanDJ-aa} shows that the approximation error of $|z|$ over $[-1,1]$ (equivalently $\sqrt{z}$ on $[0,1]$) by rational functions of degree $(n,n)$ exponentially decays with respect to $n$.
% \[
%     e^{-c_1\sqrt{n}}\lesssim\inf_{r_n\in\mathcal{R}_n}\max_{z\in[0,1]}|\sqrt{z} - r_n(z)|\lesssim e^{-c_2\sqrt{n}}, 
% \]
% where $\mathcal{R}_n$ is the set of rational functions of degree $(n,n)$, and $c_1$, $c_2$ are positive uniform constants. 
Such a result was further extended to rational approximations of $z^s$ for $s>0$ in \cite{1976TzimbalarioJ-aa,PetrushevPopov1987,Lorentz1996,stahl2003best}. For a comprehensive overview in rational approximation we refer to the monograph by Petrushev and Popov~\cite{PetrushevPopov1987}. 

 % If $\big\{(\lambda_i,\Phi_i)\big\}_{i\geq1}$ be the eigen-decomposition of $\mathcal{A}$ with $0<\lambda_1\leq\lambda_2\leq\cdots$ and $\mathcal{A}\Phi_i=\lambda_i\Phi_i$, then the action of  fractional powers of $\mathcal{A}$ is 
 %      \[
 %      \mathcal{A}^su=\mathcal{A}^s\big(\sum_iu_i\Phi_i\big)=\sum_{i}\lambda^s_iu_i\Phi_i.
 %      \]

The classical method for constructing the best uniform rational approximation to a given function is the Remez algorithm (see \cite{PetrushevPopov1987}). Stable implementations of such algorithms require quadruple precision or exact arithmetic. The best uniform rational approximation $r(z)\approx z^{1-s}$ is used in \cite{HarizanovLazarovMargenovMarinovVutov2018,2020HarizanovLazarovMargenovMarinovPasciak-a} to approximate $z^{-s}$ by $r(z)/z$ and to solve fractional diffusion. More recently, it was shown in \cite{nakatsukasa2018aaa} that greedy algorithms based on \emph{rational barycentric interpolation} yield accurate rational approximations for general functions, see \cite{nakatsukasa2018aaa} for the AAA algorithm and \cite{hofreither2021algorithm} for the BRASIL algorithm. 

In this paper, we employ a new strategy for the rational approximation based on the \emph{Orthogonal Greedy Algorithm} (OGA), which is a classical nonlinear algorithm in the field of machine learning, statistics and signal processing, see, e.g., \cite{DavisMallat1997,DeVoreTemlyakov1996,BarronCohenDahmenDeVore2008}. The OGA adaptively selects basis functions $g_1, \ldots,g_n$ from a redundant dictionary $\mathcal{D}$ and uses the orthogonal projection $\varphi_n=P_n\varphi$ onto ${\rm span}\{g_1,\ldots,g_n\}$ to approximate a target function $\varphi$. In the context of solving the fractional problem as~\eqref{Asuf}, we use an OGA with $\varphi(z)=z^{-s}$ and a dictionary 
$\mathcal{D}=\{(z+t)^{-1}\}_{t\in T}$. The resulting rational approximation  is of the form
\begin{equation*}
    z^{-s}\approx r_n(z)=\sum_{i=1}^n\frac{c_i}{z+t_i}.
\end{equation*}
The connection between \eqref{Asuf} and \eqref{Ahtuf} is seen from 
\begin{equation}\label{rationalapproximation}
    \mathcal{A}_h^{-s}f\approx r_n(\mathcal{A}_h)f_h=\sum_{i=1}^nc_i(\mathcal{A}_h+t_i\mathcal{I}_h)^{-1}f_h.
\end{equation}
The OGA directly computes the residues and poles of $r_n$ needed in approximating the action $\mathcal{A}_h^{-s}f_h$. In comparison, algorithms in \cite{nakatsukasa2018aaa,hofreither2021algorithm} output barycentric rational interpolants and then perform a generalized eigenvalue computation that may lead to numerical complex values and loss of accuracy. Moreover, the convergence of the aforementioned OGA can be analyzed rigorously assuming certain regularity condition on $\mathcal{D}$.

\subsection{Reduced Basis Method by PCG}
The Reduced Basis Method (RBM) is  a popular numerical method  for efficiently solving \emph{parametrized} PDEs dated back to 1970s (cf.~\cite{Almroth1978,NoorPeters1980,FinkRheinboldt1983}), see also \cite{Quarteroni2016,HesthavenRozzaStamm2016} for a modern introduction. A standard RBM first constructs an expensive but fixed offline subspace $\mathcal{H}_h^m\subset\mathcal{H}_h$ of small dimension $m$ and then rapidly performs online computation within $\mathcal{H}_h^m$ for many solution instances corresponding to a large number of 
parameters. The RBM could gain significant computational efficiency for many-query parameter-dependent problems provided the size of the parameter set under consideration is large. For instance, the $t$-dependent equation \eqref{Ahtuf} from fractional PDEs is a target problem for the RBM, see, e.g., \cite{DanczulSchoberl2022,DanczulSchoberl2021,DanczulHofreither2022} in this direction. These RBMs, or the equivalent rational Krylov methods (see \cite{DanczulHofreither2022}), essentially seeks approximate solutions of \eqref{Ahtuf} in the snapshot-based offline subspace
\[
\mathcal{H}_h^m={\rm span}\big\{u_h(t_{k_1}), u_h(t_{k_2}),\ldots,u_h(t_{k_m})\big\}.
\]
Clearly, when $m\ll n$, the RBM would be more efficient than directly computing the $n$ solutions of \eqref{Ahtuf}. Considering single-parameter problems, the work  \cite{maday2002priori} is able to  a priori sample snapshot parameters with guaranteed convergence. In general, the offline selection of snapshots is achieved by proper orthogonal decomposition or a posteriori error estimation and greedy algorithms, see, e.g.,  \cite{RozzaHuynhPatera2008,sen2008reduced,binev2011convergence,buffa2012priori,LiSiegel2023,BennerGugercinWillcox2015} for the implementation and theoretical analysis of RBMs.

As far as we know, the mainstream RBMs including recent constructions given in \cite{DanczulSchoberl2022,DanczulHofreither2022} use solution snapshots at a set of parameters to construct the subspace $\mathcal{H}_h^m$. In this work, we propose a novel reduced-basis approach, the \emph{Reduced Conjugate Gradient Basis Method} (RCGBM), to alleviate the computational burden of fractional diffusion. In particular, we run $m$ iterations of the preconditioned conjugate gradient for solving \eqref{Ahtuf} with a fixed parameter $t=d$ and obtain conjugate gradient directions $p_0, p_1, \ldots, p_{m-1}$, where $m$ is small and the preconditioner is designed for $\mathcal{A}_h$. The distinct feature of the RCGBM is the offline subspace 
\[
\mathcal{H}_h^m={\rm span}\big\{p_0, p_1, \ldots, p_{m-1}\big\},
\]
which is built upon conjugate gradient basis for a fixed parameter instead of a set of selected snapshots for multiple parameters used in the classical RBMs. With the help of $\mathcal{H}_h^m$, we then compute the Galerkin projection $u^{{\rm rbm}}_h(t_i)=\mathcal{P}_{\mathcal{H}_h^m}u_h(t_i)$ onto $\mathcal{H}_h^m$ for $1\leq i\leq m$. The cost $\mathcal{O}(m^3)$ of each projection is almost negligible compared with classical fast solvers, e.g., FFT or multigrids, for \eqref{Ahtuf}. The RCGBM finally outputs the numerical solution of \eqref{Asuf} as a superposition of solutions of small-scale subproblems in $\mathcal{H}_h^m$:
\[
u^{{\rm rbm}}_h=\sum_{i=1}^nc_iu_h^{{\rm rbm}}(t_i).
\]
As one can see, the implementation of the RCGBM is as simple as a one-time PCG while the classical RBM needs to solve the $m$ large-scale problems in the form \eqref{Ahtuf} to build $\mathcal{H}_h^m$. We also theoretically show that the convergence of the RCGBM is determined by the quality of the preconditioner for $\mathcal{A}_h$. Furthermore, we show numerically that a classical uniform preconditioner such as the geometric or algebraic multigrid cycle suffices to guarantee the accuracy of RCGBMs. 

An important point we would like to make is that the algorithm proposed here is clearly not restricted to fractional PDEs. In a sense, we consider the rational approximation just as an example of parameter-dependent problems. Our conjugate gradient approach seems applicable to a family of SPD elliptic problems parametrized by the diffusion coefficient. The philosophy in the RCGBM is related to the adaptive Algebraic Multigrid (AMG) in \cite{2013VassilevskiKalchev-aa} which builds subspace hierarchies by re-using the coarse spaces already built for nearby problems. In addition, there exist other interesting interactions between solvers and RBMs, see, e.g., \cite{ChungEfendievLeung2015,Buhr2021} for localized RBMs based on domain decomposition.

The rest of the paper is organized as follows. In Section \ref{sec:oga}, we explain in details the OGA for computing rational approximation of fractional power functions. In Section \ref{sec:rcgbm}, we briefly introduce the PCG and propose the RCGBM for efficient solution of fractional diffusion problems. We then present several numerical tests showing the efficiency of our RCGBM in Section \ref{sec:num}. Section \ref{sec:con} is devoted to concluding remarks.

\section{Orthogonal Greedy Algorithm}\label{sec:oga}
In compressed sensing, statistical regression, and machine learning, greedy algorithms are a class of iterative numerical methods for approximating functions using local optimization at each step, see, e.g., \cite{DeVoreTemlyakov1996,Temlyakov2008} and references therein. Let $\mathcal{V}$ a Hilbert space  and $\mathcal{D}\subset\mathcal{V}$ be a dictionary which is just a collection of elements. For the purpose of constructing good rational approximation for $z^{-s}$, we consider the following orthogonal greedy algorithm developed and analyzed in, e.g., \cite{Pati1993,DeVoreTemlyakov1996,BarronCohenDahmenDeVore2008,JonathanXu2022,LiSiegel2023}:
\begin{equation}\label{OGA0}
    \begin{aligned}
&g_n=\arg\max_{g\in\mathcal{D}}\langle g,\varphi-\varphi_{n-1}\rangle_{\mathcal{V}},\\ &\varphi_n=\mathcal{P}_{\mathcal{H}_n}\varphi,\quad \mathcal{H}_n={\rm span}\big\{g_1, \ldots, g_n\big\},
    \end{aligned}
\end{equation}
where $n\geq0$ and $\varphi_0=0$.
In this algorithm, $\mathcal{P}_{\mathcal{H}_n}$ is the orthogonal projection onto $\mathcal{H}_n$ with respect to the $\mathcal{V}$-inner product $\langle\cdot,\cdot\rangle_{\mathcal{V}}$. The analysis in \cite{stahl2003best} implies that the best uniform rational approximant to $\varphi(z)=z^{-s}$ on $[\varepsilon,1]$ with $\varepsilon>0$ must have simple and negative poles. Motivated by this fact, we first consider the dictionary
\begin{equation}\label{dictionary0}
\mathcal{D}=\left\{\frac{1}{z+t}\right\}_{t\in[b_1,b_2]},\quad 0<b_1<b_2,
\end{equation}
where $[b_1,b_2]$ is a positive interval depending on the spectrum of $\mathcal{A}_h$. The resulting rational approximation is of the form
\begin{equation}\label{rational}
    z^{-s}\approx r_n(z)=\sum_{i=1}^n\frac{c_i}{z+t_i}.
\end{equation}
Here $\{-t_i\}_{i=1}^n$ are the poles of the rational approximant and $\{c_i\}_{i=1}^n$ are the corresponding residues. Under an $L_2$ norm $\|\cdot\|$, 
the accuracy of the full rational approximation scheme \eqref{rationalapproximation} (cf.~\cite{hofreither2020unified}) is given by \begin{equation*}
    \|\mathcal{A}_h^{-s}f_h-r_n(\mathcal{A}_h)f_h\|\leq\|f_h\|\max_{z\in[\lambda_{\min},\lambda_{\max}]}|z^{-s}-r_n(z)|,
\end{equation*}
where $0<\lambda_{\min}<\lambda_{\max}$ are the minimum and maximum eigenvalues of $\mathcal{A}_h$, respectively. In practice, we shall solve the rescaled problem \[(\mathcal{A}_h/\lambda_{\max})^su_h=f_h/\lambda_{\max}^s
\]
and need to construct a good rational approximation for $z^{-s}$ over an interval $[\varepsilon,1]$ with $\varepsilon\leq\lambda^{-1}_{\max}$. To achieve this goal, in \eqref{OGA0} we set $\mathcal{V}=L_2[\varepsilon,1]$ and use the $L_2$ normalized dictionary
\begin{equation}\label{dictionary}
 \mathcal{D}=\mathcal{D}(T)=\left\{\left(\frac{1}{\varepsilon+t}-\frac{1}{1+t}\right)^{-\frac{1}{2}}\frac{1}{z+t}\right\}_{t\in T}
\end{equation}
with a finite $T\subset[b_1,b_2]$.
The specific OGA for approximating $z^{-s}$ is as follows.
\begin{algorithm}[H]
\caption{Orthogonal Greedy Algorithm}\label{a:OGA}
\begin{algorithmic}
\State Input: $s>0$, $\varepsilon>0$, $\mathcal{D}=\mathcal{D}(T)$, and an integer $n$;
\State Initialize: Set $\varphi_0=0$;
\For{$k=1:n$}
\State  $g_k=\arg\max_{g\in\mathcal{D}}\int_\varepsilon^1g(z)\big(z^{-s}-\varphi_{k-1}(z)\big)dz;$
\State Find $\varphi_k\in\mathcal{H}_k:={\rm span}\big\{g_1,\ldots,g_k\big\}$ s.t. 
\[
\int_\varepsilon^1\varphi_k(z)g(z)dz=\int_\varepsilon^1z^{-s}g(z)dz\quad\text{ for all }g\in\mathcal{H}_k;
\] 
\EndFor
\end{algorithmic}
\end{algorithm}

Convergence analysis of the OGA is an important topic and has been analyzed by many researchers. Using the following norm and space (see \cite{DeVoreTemlyakov1996,BarronCohenDahmenDeVore2008})
\begin{align*}
    \|\varphi\|_{\mathcal{L}_1(\mathcal{D})}&:=\inf\left\{\sum_i|c_i|: \varphi=\sum_{g_i\in\mathcal{D}}c_ig_i\right\},\\
    \mathcal{L}_1(\mathcal{D})&:=\big\{\varphi\in\mathcal{V}: \|\varphi\|_{\mathcal{L}_1(\mathcal{D})}<\infty\big\},
\end{align*}
the classical error estimate of the OGA for $\varphi(z)=z^{-s}$ (see \cite{DeVoreTemlyakov1996,BarronCohenDahmenDeVore2008}) reads 
\begin{equation}\label{OGAestimate0}
    \|\varphi-\varphi_n\|_{L_2[\varepsilon,1]}\leq \|\varphi\|_{\mathcal{L}_1(\mathcal{D})}(n+1)^{-\frac{1}{2}}.
\end{equation}

Recently, such an estimate has been improved provided the metric entropy number $\varepsilon_n({\rm co}(\mathcal{D}))$ of the symmetric convex hull of $\mathcal{D}$ is asymptotically small, see \cite{JonathanXu2022,LiSiegel2023}. For example, the work \cite{LiSiegel2023} has shown that 
\begin{equation}\label{OGAestimate}
    \|\varphi-\varphi_n\|_{L_2[\varepsilon,1]}\leq \frac{(n!V_n)^\frac{1}{n}}{\sqrt{n}}\|\varphi\|_{\mathcal{L}_1(\mathcal{D})}\varepsilon_n({\rm co}(\mathcal{D})),
\end{equation}
where $V_n$ is the volume of the $n$-dimensional unit ball. For example, a wide class of dictionaries with $s$-order bounded derivatives in $\mathbb{R}^d$ satisfies $\varepsilon_n({\rm co}(\mathcal{D}))=\mathcal{O}(n^{-\frac{1}{2}-\frac{s}{d}})$, see \cite{JonathanXuFoCM}. Therefore, the OGA in our case is expected to enjoy   convergence much faster than $\mathcal{O}(n^{-\frac{1}{2}})$.

It is noted that \eqref{OGAestimate0} and \eqref{OGAestimate} hinge on the membership of $\varphi\in\mathcal{L}_1(\mathcal{D})$, which will be investigated and addressed in a separate paper. We postpone the implementation details and the code of Algorithm \ref{a:OGA} with $\varphi(z)=z^{-s}$ in 
Section \ref{sec:num} and Appendix.

\section{Reduced Conjugate Gradient Basis Method}\label{sec:rcgbm}
If $\mathcal{A}=-\Delta$ is the negative Laplacian or a more general second-order differential operator, then  $\mathcal{A}_h$ arising from the finite-element or finite-difference discretization often corresponds to a  sparse and large-scale ill-conditioned SPD matrix of millions or billions of rows and columns. In such cases, it would be time-consuming to solve the subproblems in \eqref{Ahtuf} for all $1\leq i\leq n$ even in the presence of an optimal fast solver, e.g., the multigrid-preconditioned conjugate gradient. 
The RCGBM in this work is invented to further significantly reduce the computational cost  below that of calling multigrid-PCG $n$ times and meanwhile to maintain numerical accuracy at the same level. %The idea behind the RCGBM is to generate a subspace $\mathcal{H}_h^m$ by a one-time PCG with a fixed preconditioner to approximate the solution $u_h(t_i)$ well for all $i$.

\subsection{Preconditioned Conjugate Gradient}\label{sec:pcg}
The PCG proposed by Hestenes and Stiefel \cite{hestenes1952methods} is one of the most successful (iterative) methods for solving sparse systems of linear equations developed in the 20th century. For simplicity of presentation, we include the PCG algorithm for $\mathbb{A}\mathbf{x}=\mathbf{b}\in\mathbb{R}^N$ with initial guess zero, where $\mathbb{A}$ is an SPD matrix defining the $\mathbb{A}$-norm $\|\mathbf{x}\|_\mathbb{A}:=\sqrt{\mathbf{x}^\top\mathbb{A}\mathbf{x}}$.
\begin{algorithm}[H]
\caption{Preconditioned  Conjugate Gradient}\label{a:PCG}
\begin{algorithmic}
\State Input: $\mathbf{x}_0\in\mathbb{R}^N$, an SPD matrix $\mathbb{B}\in\mathbb{R}^{N\times N}$, an integer $m\geq1$;
\State Initialize:  
$\mathbf{r}_0 = \mathbf{b} - \mathbb{A}\mathbf{x}_0$, $\mathbf{p}_0=\mathbf{z}_0=\mathbb{B}\mathbf{b}$;
\For{$k=1:m-1$}
\State $\alpha_k=\mathbf{r}_{k-1}^\top \mathbf{z}_{k-1}/\mathbf{p}^\top_{k-1}\mathbb{A}\mathbf{p}_{k-1}$;
\State $\mathbf{x}_k =  \mathbf{x}_{k-1} + \alpha_k \mathbf{p}_{k-1}$;
\State $\mathbf{r}_k =  \mathbf{r}_{k-1} - \alpha_k\mathbb{A}\mathbf{p}_{k-1}$; 
\State $\mathbf{z}_k =  \mathbb{B} \mathbf{r}_k$;
\State $\beta_k =  r_k^\top \mathbf{z}_k/\mathbf{r}_{k-1}^\top\mathbf{z}_{k-1}$;
\State $\mathbf{p}_k = \mathbf{z}_k + \beta_k\mathbf{p}_{k-1}$; 
\EndFor
\end{algorithmic}
\end{algorithm}
The next theorem is a classical result addressing the convergence rate of the PCG under the $\mathbb{A}$-norm, see, e.g., \cite{Xu1992,Braess2007}.
\begin{theorem}\label{PCGerror}
Let $\kappa(\mathbb{BA})=\lambda_{\max}(\mathbb{BA})/\lambda_{\min}(\mathbb{BA})$ be the spectral condition number of $\mathbb{BA}$. The PCG algorithm \ref{a:PCG} satisfies
    \begin{equation}\label{PCGconvergence}
    \|\mathbf{x} - \mathbf{x}_m \|_\mathbb{A}
    \leq
    2\left(\frac{\sqrt{\kappa(\mathbb{BA})}-1}{\sqrt{\kappa(\mathbb{BA})+1}}\right)^m
    \|\mathbf{x} - \mathbf{x}_0\|_\mathbb{A}.
    \end{equation}
  \end{theorem}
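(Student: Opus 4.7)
The plan is to follow the classical Krylov-subspace and Chebyshev-polynomial route. First, I would verify by induction on the PCG recursion that the search directions $\mathbf{p}_0,\ldots,\mathbf{p}_{m-1}$ and the error update satisfy
\begin{equation*}
\mathbf{x}_m-\mathbf{x}_0\in\mathcal{K}_m:=\operatorname{span}\bigl\{\mathbb{B}\mathbf{r}_0,(\mathbb{B}\mathbb{A})\mathbb{B}\mathbf{r}_0,\ldots,(\mathbb{B}\mathbb{A})^{m-1}\mathbb{B}\mathbf{r}_0\bigr\},
\end{equation*}
and that the built-in $\mathbb{A}$-conjugacy $\mathbf{p}_k^\top\mathbb{A}\mathbf{p}_j=0$ for $k\neq j$ forces $\mathbf{x}_m$ to be the minimizer of $\mathbf{y}\mapsto\|\mathbf{x}-\mathbf{y}\|_\mathbb{A}$ over $\mathbf{y}\in\mathbf{x}_0+\mathcal{K}_m$. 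This is the standard Galerkin/optimality statement for conjugate gradients written in the preconditioned $\mathbb{A}$-inner product.

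Second, I would convert this minimization into a polynomial approximation problem. Writing an arbitrary element of $\mathbf{x}_0+\mathcal{K}_m$ as $\mathbf{y}=\mathbf{x}_0+q(\mathbb{B}\mathbb{A})\mathbb{B}\mathbf{r}_0$ with $\deg q\leq m-1$ and using $\mathbb{B}\mathbf{r}_0=\mathbb{B}\mathbb{A}(\mathbf{x}-\mathbf{x}_0)$, one obtains $\mathbf{x}-\mathbf{y}=p(\mathbb{B}\mathbb{A})(\mathbf{x}-\mathbf{x}_0)$ with $p(t)=1-tq(t)$ satisfying $p(0)=1$. Therefore
\begin{equation*}
\|\mathbf{x}-\mathbf{x}_m\|_\mathbb{A}=\min_{p\in\mathcal{P}_m,\,p(0)=1}\|p(\mathbb{B}\mathbb{A})(\mathbf{x}-\mathbf{x}_0)\|_\mathbb{A}.
\end{equation*}
Since $\mathbb{B}\mathbb{A}$ is self-adjoint with respect to the $\mathbb{A}$-inner product (because $\mathbb{A}(\mathbb{B}\mathbb{A})=(\mathbb{A}\mathbb{B}\mathbb{A})$ is symmetric) and its spectrum lies in $[\lambda_{\min},\lambda_{\max}]:=[\lambda_{\min}(\mathbb{B}\mathbb{A}),\lambda_{\max}(\mathbb{B}\mathbb{A})]$, expanding $\mathbf{x}-\mathbf{x}_0$ in an $\mathbb{A}$-orthonormal eigenbasis yields
\begin{equation*}
\|p(\mathbb{B}\mathbb{A})(\mathbf{x}-\mathbf{x}_0)\|_\mathbb{A}\leq\max_{t\in[\lambda_{\min},\lambda_{\max}]}|p(t)|\cdot\|\mathbf{x}-\mathbf{x}_0\|_\mathbb{A}.
\end{equation*}

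Third, I would invoke the classical Chebyshev extremal result: the polynomial of degree $\leq m$ with $p(0)=1$ that minimizes the uniform norm on $[\lambda_{\min},\lambda_{\max}]$ is a rescaling of the Chebyshev polynomial of the first kind, giving
\begin{equation*}
\min_{p\in\mathcal{P}_m,\,p(0)=1}\max_{t\in[\lambda_{\min},\lambda_{\max}]}|p(t)|=\frac{1}{T_m\bigl((\lambda_{\max}+\lambda_{\min})/(\lambda_{\max}-\lambda_{\min})\bigr)}.
\end{equation*}
Using the substitution $T_m(\tfrac12(\xi+\xi^{-1}))=\tfrac12(\xi^m+\xi^{-m})$ with $\xi=(\sqrt{\kappa}+1)/(\sqrt{\kappa}-1)$, where $\kappa=\kappa(\mathbb{B}\mathbb{A})$, gives the upper bound $2\bigl((\sqrt{\kappa}-1)/(\sqrt{\kappa}+1)\bigr)^m$, from which \eqref{PCGconvergence} follows.

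The conceptual heart of the argument is the $\mathbb{A}$-orthogonal minimization property in step one; the main technical obstacle is the Chebyshev bound in step three, which requires both the extremal characterization and the $\xi+\xi^{-1}$ manipulation to expose the factor $(\sqrt{\kappa}-1)/(\sqrt{\kappa}+1)$. Both ingredients are classical, so I would simply cite \cite{Xu1992,Braess2007} for the Chebyshev estimate rather than redo the calculation.
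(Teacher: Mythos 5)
Your proof is correct and follows the standard Krylov-optimality plus Chebyshev-polynomial argument; the paper itself gives no proof of this theorem, stating it as a classical result and citing \cite{Xu1992,Braess2007}, and your outline is precisely the argument found in those references. (Incidentally, your derivation produces the denominator $\sqrt{\kappa(\mathbb{B}\mathbb{A})}+1$, which confirms that the paper's displayed $\sqrt{\kappa(\mathbb{B}\mathbb{A})+1}$ is a typo.)
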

We shall show that the PCG, as an alternative to the greedy algorithm or proper orthogonal decomposition used in the existing reduced basis method (cf.~\cite{Maday2006,RozzaHuynhPatera2008,binev2011convergence,Quarteroni2016}), can provide a good reduced basis.

\subsection{The RCGBM Algorithm}\label{sec:main}
As mentioned before, our main goal is to efficiently solve the discrete problem \eqref{Ahtuf} with a number of varying parameters $t_i$. 
To be more precise, we assume that $\mathcal{H}_h$ admits an $L_2$-type inner product $(\cdot,\cdot)$ and $\mathcal{A}_h$ is determined by a bilinear form $a: \mathcal{H}_h\times\mathcal{H}_h\rightarrow\mathbb{R}$ as follows
\begin{equation}
    a(v_h,w_h)=(\mathcal{A}_hv_h,w_h),\quad v_h, w_h\in\mathcal{H}_h.
\end{equation}
Here $a(\cdot,\cdot)$ is assumed to be SPD in the sense that for $v_h, w_h\in\mathcal{H}_h$,
\begin{equation*}
  a(v_h,w_h)=(w_h,v_h),
\quad\mbox{and}\quad  a(v_h,v_h)\geq a_0(v_h,v_h),
\end{equation*} 
where $a_0>0$ is a constant.
The variational formulation of \eqref{Ahtuf} reads
\begin{equation}\label{Ahtufvar}
    a(u_h(t_i),v_h)+t_i(u_h(t_i),v_h)= (f_h,v_h),\quad v_h\in\mathcal{H}_h.
\end{equation}
We recall that 
Algorithm \ref{a:OGA}  outputs the partial fraction decomposition  
\begin{equation*}
  r_n(z) = \sum_{i=1}^n \frac{c_i}{z + t_i}\approx z^{-s}.
\end{equation*}
Then instead of solving the fractional problem $\tilde{u}_h=\mathcal{A}_h^{-s} f_h$ directly, an approximate solution is obtained by
\begin{equation}\label{eq:ur}
   u_h=r_n(\mathcal{A}) f_h = \sum_{i=1}^n c_i (\mathcal{A}_h+t_i \mathcal{I}_h)^{-1} f_h\approx\tilde{u}_h.
\end{equation} 

Now we use a PCG to solve a sample problem
\begin{equation*}
\mathcal{A}_du_h=(\mathcal{A}_h + d\mathcal{I}_h)u_h = f_h,   
\end{equation*}
where $d\geq0$ is a fixed constant. In our RCGBM, the resulting conjugate gradient directions $p_0, p_1,\ldots, p_{m-1}$ serve as a reduced basis. We remark that the quality of this reduced basis is almost independent of  the particular value of $d$. To avoid trivial termination of the PCG iteration, the preconditioner $\mathcal{B}_h$ in the PCG algorithm is set to be the exact inverse or a multigrid cycle for 
$$\mathcal{A}_h+s\mathcal{I}_h,\quad 0\leq s\neq d.$$
The key observation is that each inverse action in \eqref{eq:ur} can be approximated in the tiny subspace $\mathcal{H}^m_h = \operatorname{span}\big\{p_0, p_1,\dots, p_{m-1}\big\}$. We summarize the aforementioned strategy in the next algorithm. 
\begin{algorithm}[H]
  \caption{RCGBM($s, d, m$)\label{a:RCGBM}}
  \begin{algorithmic}
    \State Input: Distinct non-negative constants $s\neq d$, and an integer $m>0$;
    \State Preconditioning: Construct a preconditioner $\mathcal{B}_h: \mathcal{H}_h\rightarrow\mathcal{H}_h$ for $\mathcal{A}_h+s\mathcal{I}_h$;
    \State Initialize: $u_0=0$,
    $r_0=f_h$ and $p_0=z_0=\mathcal{B}_hr_0;$ 
   \For{$k=1:m-1$}
   \State $\alpha_k=(r_{k-1},z_{k-1})/(\mathcal{A}_dp_{k-1},p_{k-1})$;
   \State $u_k =  u_{k-1} + \alpha_k p_{k-1}$; 
   \State $r_k =  r_{k-1} - \alpha_k \mathcal{A}_dp_{k-1}$; 
   \State $z_{k} =  \mathcal{B}_h r_{k}$; 
   \State $\beta_k = (r_{k},z_{k})/(r_{k-1},z_{k-1})$;
   \State $p_{k} = z_{k} + \beta_k p_{k-1}$; 
   \EndFor
   \State Set the subspace:  $\mathcal{H}_h^m = {\rm span}\big\{p_0,\dots,p_{m-1}\big\}$;
   \State Solve subproblems: Find $u_h^{{\rm rbm}}(t_i):=\mathcal{P}_{\mathcal{H}_h^m}u_h(t_i)\in\mathcal{H}_h^m$, $i=1,2,\ldots,n,$ s.t. \[
   a(u_h^{{\rm rbm}}(t_i),v_h)+(u_h^{{\rm rbm}}(t_i),v_h)=(f_h,v_h),\quad v_h\in\mathcal{H}_h^m;
   \]
   \State RCGBM solution: $u_h^{{\rm rbm}} = \sum_{i=1}^nc_iu_h^{{\rm rbm}}(t_i)$. 
  \end{algorithmic}  
\end{algorithm}
The main computational cost in Algorithm \ref{a:RCGBM} is attributed to the construction of $\mathcal{B}_h$ and the for-loop, which 
 is simply the PCG iteration. The solution processes for $\big\{u_h^{\rm rbm}(t_i)\big\}_{1\leq i\leq n}$ is $\mathcal{O}(nm^3)$ and is almost negligible.

Next we rewrite Algorithm \ref{a:RCGBM} in matrix notation when  solving fractional diffusion by finite element methods. Let $\Omega\subset\mathbb{R}^d$ be a Lipschitz domain, and $\alpha, \gamma$ be sufficiently regular functions on $\Omega$ with $\min_{x\in\Omega}\alpha(x)>0$, $\min_{x\in\Omega}\gamma(x)\geq0$. We consider the following fractional problem
\begin{equation}\label{fractionalPDE}
    \mathcal{A}^su=(-\nabla\cdot(\alpha\nabla) +\gamma\mathcal{I})^su=f\quad\text{on}\quad\Omega
\end{equation}
under the homogeneous boundary condition $u|_{\partial\Omega}=0.$ Given a triangulation $\mathcal{T}_h$ of $\Omega$, let $\mathcal{H}_h\subset H_0^1(\Omega)$ be the continuous and piecewise linear finite element space with the nodal basis $\{\phi_i\}_{1\leq i\leq N}$. In this case,  $\mathcal{A}_h: \mathcal{H}_h\rightarrow\mathcal{H}_h$ is determined by 
\begin{equation*}
\begin{aligned}
      &(\mathcal{A}_hv_h,w_h)=(\mathcal{A}_hv_h,w_h)_{L_2(\Omega)}\\
      &=a(v_h,w_h)=\int_\Omega\big(\alpha\nabla v_h\cdot\nabla w_h+\gamma v_hw_h \big)dx,\quad v_h, w_h\in\mathcal{H}_h.  
\end{aligned}
\end{equation*}
The finite-element stiffness and mass matrices read
\begin{equation*}
    \mathbb{A}:=\big(a(\phi_j,\phi_i)\big)_{1\leq i,j\leq N},\quad
    \mathbb{M}:=\big((\phi_j,\phi_i)\big)_{1\leq i,j\leq N},\quad
    \mathbb{A}_d:=\mathbb{A}+d\mathbb{M}.
\end{equation*}
In coordinate form, we have $u_h(t)=\sum_{i=1}^NU_i(t)\phi_i$ and
\begin{align*}
     \mathbf{u}(t):=\big(U_1(t),\ldots,U_N(t)\big)^\top,\quad \mathbf{f}:=\big((f,\phi_1),\ldots,(f,\phi_N)\big)^\top.
\end{align*}
Then \eqref{Ahtufvar} is translated into
\begin{equation*}
    (\mathbb{A}+t_i\mathbb{M})\mathbf{u}(t_i)=\mathbf{f}.
\end{equation*}
Let $\mathbf{p}_j$ be the coordinate vector of $p_j$ with respect to $\{\phi_i\}_{1\leq i\leq N}$. 
The matrix-vector formulation of Algorithm \ref{a:RCGBM} is given below.
\begin{algorithm}[H]
  \caption{RCGBM($s, d, m$) in matrix form}\label{a:RCGBMmatrix}
  \begin{algorithmic}
    \State Input: Distinct non-negative constants $s\neq d$, and an integer $m>0$;
    \State Preconditioning: Constrcut a preconditioner $\mathbb{B}\in \mathbb{R}^{N\times N}$ for $\mathbb{A}+s\mathbb{M}$;
    \State Initialize: 
    $\mathbf{r}_0=\mathbf{f}_h$, $\mathbf{z}_0=\mathbb{B}\mathbf{r}_0$, $\mathbf{p}_0=\mathbf{z}_0$, $l_0=\mathbf{p}_0^\top\mathbb{A}_d\mathbf{p}_0$; 
   \For{$k=1:m-1$}
   \State $\alpha_k=\mathbf{r}_{k-1}^\top\mathbf{z}_{k-1}/l_{k-1}$;
   \State $\mathbf{u}_k = \mathbf{u}_{k-1} + \alpha_k \mathbf{p}_{k-1}$; 
   \State $\mathbf{r}_k =  \mathbf{r}_{k-1} - \alpha_k \mathbb{A}_d\mathbf{p}_{k-1}$; 
   \State $\mathbf{z}_k =  \mathbb{B}\mathbf{r}_k$; 
   \State $\beta_k =  \mathbf{r}_k^\top \mathbf{z}_k/\mathbf{r}_{k-1}^\top\mathbf{z}_{k-1}$;
   \State $\mathbf{p}_k = \mathbf{z}_k + \beta_k\mathbf{p}_{k-1}$; 
   \State $l_k=\mathbf{p}_k^\top\mathbb{A}_d\mathbf{p}_k$;
   \EndFor
   \State Set the projection matrix: $\mathbb{P} = [\mathbf{p}_0/l_0^\frac{1}{2},\ldots,\mathbf{p}_{m-1}/l_{m-1}^\frac{1}{2}]$;
   \State Solve subproblems:  $\mathbf{u}^{\rm rbm}(t_i)=\mathbb{P}(\mathbb{P}^\top\mathbb{A}\mathbb{P}+t_i\mathbb{P}^\top\mathbb{M}\mathbb{P})^{-1}\mathbb{P}^\top\mathbf{f}$, $1\leq i\leq n$;
   \State RCGBM solution: $\mathbf{u}^{\rm rbm} = \sum_{i=1}^nc_i\mathbf{u}^{\rm rbm}(t_i)$.
  \end{algorithmic}  
\end{algorithm}
After finishing the PCG for-loop of Algorithm \ref{a:RCGBMmatrix}, we normalized the conjugate gradient basis $\{\mathbf{p}_k\}_{k=0}^{m-1}$ under the $\|\cdot\|_{\mathbb{A}_d}$-norm. To ensure the advantage of the RCGBM, the matrix-matrix and matrix-vector multiplications  $\mathbb{P}^\top\mathbb{A}\mathbb{P}$, $\mathbb{P}^\top\mathbb{M}\mathbb{P}$, $\mathbb{P}^\top\mathbf{f}$ should be pre-computed and stored prior to solving for $\mathbf{u}^{\rm rbm}(t_i)$ for any $i$. Then the cost of the direct inversion in $(\mathbb{P}^\top\mathbb{A}\mathbb{P}+t_i\mathbb{P}^\top\mathbb{M}\mathbb{P})^{-1}\mathbb{P}^\top\mathbf{f}$ is $\mathcal{O}(m^3)$.
\begin{remark}
Algorithms \ref{a:RCGBM} and \ref{a:RCGBMmatrix} could also be applied to finite-difference discretizations of fractional diffusion. In such cases, the numerical solution $u_h(t_i)$ is a vector formed by approximate values of $u(t_i)$ at finite difference points $\{x_j\}_{1\leq j\leq N}$, and $(\cdot,\cdot)$ becomes the discrete $\ell_2$ inner product in $\mathcal{H}_h=\mathbb{R}^N$. The corresponding mass matrix $\mathbb{M}$ then reduces to the trivial identity matrix $\mathcal{I}_h=\mathbb{I}$, and the stiffness matrix $\mathbb{A}=\mathcal{A}_h$ is simply a finite difference approximation to $-\nabla\cdot(\alpha\nabla) +\gamma\mathcal{I}$.
\end{remark}

By the well-known property of the conjugate gradient method, we have that column of $\mathbb{P}$ is an  orthonormal basis under the $\mathbb{A}_d$-inner product and thus the RCGBM is well-defined. We refer to the classical texts \cite{hestenes1952methods,Braess2007} for a detailed explanation.
\begin{proposition}\label{prop: p-A-orthogonal}
The set of vectors $\{\mathbf{p}_k\}_{k=0}^{m-1}$ in Algorithm \ref{a:RCGBM} is orthonormal with respect to the $\mathbb{A}_d$-inner product, i.e. 
  \begin{subequations}
  \begin{align}
          \mathbf{p}_i^\top\mathbb{A}_d\mathbf{p}_j &= 0, \quad0\leq i\neq j\leq m-1,\\
                \mathbf{p}_i^\top\mathbb{A}_d\mathbf{p}_i &= 1,\quad 0\leq i\leq m-1.
  \end{align}
  \end{subequations}
\end{proposition}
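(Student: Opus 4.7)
The claim is a classical property of the (preconditioned) conjugate gradient method applied to the SPD operator $\mathbb{A}_d$, and I would prove it by following the standard Hestenes--Stiefel argument, adapted to the present notation.

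My plan is to do a simultaneous induction on $k$ for the two coupled statements
\begin{equation*}
\text{(I)}\ \mathbf{p}_i^\top\mathbb{A}_d\mathbf{p}_j=0\ \text{for all}\ 0\le i<j\le k,
\qquad
\text{(II)}\ \mathbf{r}_i^\top\mathbf{z}_j=0\ \text{for all}\ 0\le i\ne j\le k,
\end{equation*}
together with the auxiliary Krylov-type relation $\operatorname{span}\{\mathbf{p}_0,\dots,\mathbf{p}_k\}=\operatorname{span}\{\mathbf{z}_0,\mathbb{B}\mathbb{A}_d\mathbf{z}_0,\dots,(\mathbb{B}\mathbb{A}_d)^k\mathbf{z}_0\}$. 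The base case $k=1$ is a direct computation using $\mathbf{p}_1=\mathbf{z}_1+\beta_1\mathbf{p}_0$ with $\beta_1=\mathbf{r}_1^\top\mathbf{z}_1/\mathbf{r}_0^\top\mathbf{z}_0$ and the update $\mathbf{r}_1=\mathbf{r}_0-\alpha_1\mathbb{A}_d\mathbf{p}_0$; the choice of $\alpha_1$ makes $\mathbf{r}_1^\top\mathbf{z}_0=0$ and then the choice of $\beta_1$ makes $\mathbf{p}_1^\top\mathbb{A}_d\mathbf{p}_0=0$.

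For the inductive step I would expand $\mathbf{p}_{k+1}^\top\mathbb{A}_d\mathbf{p}_i$ by substituting $\mathbf{p}_{k+1}=\mathbf{z}_{k+1}+\beta_{k+1}\mathbf{p}_k$ and, for $i<k$, replacing $\mathbb{A}_d\mathbf{p}_i=(\mathbf{r}_{i}-\mathbf{r}_{i+1})/\alpha_{i+1}$ so that the $\mathbb{A}_d$-inner product is rewritten as a combination of residual inner products $\mathbf{z}_{k+1}^\top\mathbf{r}_j$; the inductive hypothesis (II), together with the identity $\mathbf{z}_j=\mathbb{B}\mathbf{r}_j$ and the symmetry of $\mathbb{B}$, eliminates these terms. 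The case $i=k$ is handled by the definition of $\beta_{k+1}$. Orthogonality (II) at level $k+1$ then follows from (I) at level $k+1$ by the same residual recurrence combined with $\alpha_{k+1}=\mathbf{r}_k^\top\mathbf{z}_k/\mathbf{p}_k^\top\mathbb{A}_d\mathbf{p}_k$.

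Finally, the unit-norm identity $\mathbf{p}_i^\top\mathbb{A}_d\mathbf{p}_i=1$ is \emph{not} a consequence of the conjugacy above but rather of the explicit rescaling done when forming the projection matrix $\mathbb{P}$ in Algorithm~\ref{a:RCGBMmatrix}: each direction is divided by $l_i^{1/2}=(\mathbf{p}_i^\top\mathbb{A}_d\mathbf{p}_i)^{1/2}$, so I simply note that the vectors appearing in the proposition are the normalized columns of $\mathbb{P}$, which are well-defined because SPD-ness of $\mathbb{A}_d$ gives $l_i>0$ as long as $\mathbf{p}_i\ne 0$.

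The only genuine obstacle is bookkeeping: the inductive step involves several indices and one must be careful that the preconditioner $\mathbb{B}$ enters consistently through $\mathbf{z}_j=\mathbb{B}\mathbf{r}_j$ and that the identities above indeed hold for the two-term recurrence in Algorithm~\ref{a:RCGBMmatrix} (as opposed to a fully reorthogonalized Lanczos variant). Since all the needed algebraic identities are standard, I expect the proof to amount to a careful recitation of the Hestenes--Stiefel induction rather than any new idea.
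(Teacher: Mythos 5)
The paper gives no proof of this proposition at all---it simply invokes the classical theory of the preconditioned conjugate gradient method and cites \cite{hestenes1952methods,Braess2007}---and your write-up is exactly that standard Hestenes--Stiefel induction (simultaneous conjugacy of the $\mathbf{p}_k$ and $\mathbb{B}$-orthogonality of the residuals over the Krylov space), so you are taking the same route, merely spelled out. You also correctly identify the one point the statement glosses over: the directions produced inside Algorithm~\ref{a:RCGBM} are only $\mathbb{A}_d$-conjugate, and the unit-norm identity $\mathbf{p}_i^\top\mathbb{A}_d\mathbf{p}_i=1$ holds only after the rescaling by $l_i^{1/2}$ performed when the projection matrix $\mathbb{P}$ is assembled in Algorithm~\ref{a:RCGBMmatrix}, which is evidently the intended reading.
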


Based on our numerical experiments, typically, only a small number of PCG directions (5$\sim$10) are needed to approximate the actions $(\mathcal{A}_h + t_i \mathcal{I}_h)^{-1}f_h$. The next theorem theoretically confirms the convergence of the RCGBM measured by the norm $\|v_h\|_{\mathcal{A}_h}^2:=(\mathcal{A}_hv_h,v_h)$.
\begin{theorem}\label{errorRCGBM}
Let $\mathcal{A}_h^i=\mathcal{A}_h+t_i\mathcal{I}_h$ and $u_h=\sum_{i=1}^nc_i(\mathcal{A}_h+t_i\mathcal{I}_h)^{-1}f_h$. For the RCGBM(s,d,m) with $\mathcal{B}_h=(\mathcal{A}_h+s\mathcal{I}_h)^{-1}$ and $s\not\in\{t_i\}_{1\leq i\leq n}$,  we have
  \begin{equation}\label{RBMconvergence}
    \|u_h - u_h^{{\rm rbm}}\|_{\mathcal{A}_h}\leq 2\sum_{i=1}^n|c_i| \left(\frac{\sqrt{\kappa(\mathcal{B}_h\mathcal{A}_h^i)} - 1}{\sqrt{\kappa(\mathcal{B}_h\mathcal{A}_h^i)} + 1}\right)^m\|u_h(t_i)\|_{\mathcal{A}_h^i}.
  \end{equation}
\end{theorem}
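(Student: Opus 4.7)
The plan is to pass from the global error $\|u_h - u_h^{\rm rbm}\|_{\mathcal{A}_h}$ to a sum of per-pole errors $\|u_h(t_i) - u_h^{\rm rbm}(t_i)\|_{\mathcal{A}_h^i}$, each of which is in turn controlled by the $m$-th PCG iterate for the shifted system with shift~$t_i$. First, applying the triangle inequality to $u_h - u_h^{\rm rbm} = \sum_{i=1}^n c_i\bigl(u_h(t_i) - u_h^{\rm rbm}(t_i)\bigr)$ and using operator monotonicity $\mathcal{A}_h \leq \mathcal{A}_h + t_i\mathcal{I}_h = \mathcal{A}_h^i$ (valid since $t_i \geq 0$), one obtains
\begin{equation*}
\|u_h - u_h^{\rm rbm}\|_{\mathcal{A}_h} \leq \sum_{i=1}^n |c_i|\,\|u_h(t_i) - u_h^{\rm rbm}(t_i)\|_{\mathcal{A}_h^i}.
\end{equation*}

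Next, the Galerkin equation defining $u_h^{\rm rbm}(t_i)$ in Algorithm \ref{a:RCGBM} uses the bilinear form $a(\cdot,\cdot) + t_i(\cdot,\cdot) = (\mathcal{A}_h^i\cdot,\cdot)$, so $u_h^{\rm rbm}(t_i)$ is the $\mathcal{A}_h^i$-orthogonal projection of $u_h(t_i)$ onto $\mathcal{H}_h^m$, hence the best $\mathcal{A}_h^i$-norm approximation in this subspace. This replaces the task of analyzing the projection directly with that of exhibiting a single good candidate in $\mathcal{H}_h^m$.

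The crux is to identify $\mathcal{H}_h^m$ with the preconditioned Krylov subspace associated to the shifted system. Since $\mathcal{B}_h = (\mathcal{A}_h + s\mathcal{I}_h)^{-1}$ is a function of $\mathcal{A}_h$, the operators $\mathcal{B}_h$, $\mathcal{A}_d$, $\mathcal{A}_h^i$ pairwise commute and satisfy
\begin{equation*}
\mathcal{B}_h\mathcal{A}_d = \mathcal{I}_h + (d-s)\mathcal{B}_h, \qquad \mathcal{B}_h\mathcal{A}_h^i = \mathcal{I}_h + (t_i-s)\mathcal{B}_h.
\end{equation*}
The hypotheses $d \neq s$ and $s \notin \{t_i\}$ make both of these invertible affine shifts of $\mathcal{B}_h$, so polynomials of degree $\leq m-1$ in $\mathcal{B}_h\mathcal{A}_d$, in $\mathcal{B}_h$, and in $\mathcal{B}_h\mathcal{A}_h^i$ span the same linear space. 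Applied to the starting vector $\mathcal{B}_h f_h$ of Algorithm \ref{a:RCGBM}, this yields
\begin{equation*}
\mathcal{H}_h^m = \mathcal{K}_m(\mathcal{B}_h\mathcal{A}_d, \mathcal{B}_h f_h) = \mathcal{K}_m(\mathcal{B}_h\mathcal{A}_h^i, \mathcal{B}_h f_h).
\end{equation*}

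To finish, let $x_m^{(i)}$ denote the $m$-th PCG iterate produced by Algorithm \ref{a:PCG} applied to $\mathcal{A}_h^i x = f_h$ with preconditioner $\mathcal{B}_h$ and zero initial guess; then $x_m^{(i)} \in \mathcal{H}_h^m$. Combining the best-approximation property with Theorem \ref{PCGerror} (used on the shifted pair $(\mathcal{A}_h^i, \mathcal{B}_h)$, with exact solution $u_h(t_i)$ and initial error $u_h(t_i)$) gives
\begin{equation*}
\|u_h(t_i) - u_h^{\rm rbm}(t_i)\|_{\mathcal{A}_h^i} \leq \|u_h(t_i) - x_m^{(i)}\|_{\mathcal{A}_h^i} \leq 2\!\left(\frac{\sqrt{\kappa(\mathcal{B}_h\mathcal{A}_h^i)}-1}{\sqrt{\kappa(\mathcal{B}_h\mathcal{A}_h^i)}+1}\right)^{\!m}\!\|u_h(t_i)\|_{\mathcal{A}_h^i},
\end{equation*}
and summing over $i$ with weights $|c_i|$ produces the claimed bound. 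The main obstacle is Step 3, the Krylov subspace identification: it is a purely algebraic statement, but it depends critically on the exact-inverse structure of $\mathcal{B}_h$, which places $\mathcal{B}_h$, $\mathcal{A}_d$, and $\mathcal{A}_h^i$ in the one-parameter commutative subalgebra generated by $\mathcal{A}_h$. For a general (e.g.\ multigrid) preconditioner this commutativity is lost, explaining why the sharp estimate here is formulated for the exact-inverse case.
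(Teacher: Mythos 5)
Your proposal is correct and follows essentially the same route as the paper: both hinge on the affine identities $\mathcal{B}_h\mathcal{A}_d=\mathcal{I}_h+(d-s)\mathcal{B}_h$ and $\mathcal{B}_h\mathcal{A}_h^i=\mathcal{I}_h+(t_i-s)\mathcal{B}_h$ to identify $\mathcal{H}_h^m$ with the preconditioned Krylov space of every shifted system, then invoke the PCG convergence bound (Theorem \ref{PCGerror}) per pole, the estimate $\|\cdot\|_{\mathcal{A}_h}\leq\|\cdot\|_{\mathcal{A}_h^i}$, and the triangle inequality. Your phrasing via the best-approximation property of the $\mathcal{A}_h^i$-Galerkin projection with the $m$-th PCG iterate as a competitor is just a slightly more explicit rendering of the paper's statement that the projection is equivalent to $m$ PCG iterations.
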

\begin{proof}
Applying the PCG to $\mathcal{B}_h\mathcal{A}_h^iu_h(t_i)= \mathcal{B}_hf_h$ yields the conjugate gradient directions $p_0^i, p_1^i, \ldots, p_{m-1}^i$ and the subspace \[
\mathcal{H}_{h,i}^m={\rm span}\big\{p_0^i, p_1^i, \ldots, p_{m-1}^i\big\}.
\]
It follows from $\mathcal{B}_h\mathcal{A}_h^i = \mathcal{I} + (t_i-s)\mathcal{B}_h$ that 
\begin{equation*}
    \begin{aligned}
    \mathcal{H}_{h,i}^m&= \operatorname{span}\big\{\mathcal{B}_hr_0, \mathcal{B}_h\mathcal{A}_h^i\mathcal{B}_hr_0, \ldots,(\mathcal{B}_h\mathcal{A}_h^i)^{m-1} \mathcal{B}_hr_0\big\}\\
    &= {\rm span}\big\{\mathcal{B}_hr_0, \mathcal{B}^2_hr_0, \ldots,B_h^mr_0\big\} = \mathcal{H}_h^m,
    \end{aligned}
\end{equation*} 
where $\mathcal{H}_h^m$ is the reduced basis subspace used in the RCGBM. For the subproblems in the RCGBM, we
note that computing the Galerkin projection \[
\mathcal{P}_{\mathcal{H}_{h,i}^m}u_h(t_i)=\mathcal{P}_{\mathcal{H}_{h}^m}u_h(t_i)
\]
under the $\mathcal{A}_h^i$ inner product is equivalent to solving $\mathcal{A}_h^iu_h(t_i)= f_h$ by $m$ PCG iterations. Therefore using Theorem \ref{PCGerror} we obtain
\begin{equation}\label{errori}
\begin{aligned}
    &\|u_h(t_i)-u^{\rm rbm}_h(t_i)\|_{\mathcal{A}_h}\leq\|u_h(t_i)-u^{\rm rbm}_h(t_i)\|_{\mathcal{A}^i_h}\\
      &\quad\leq2\beta_i^m\|u_h(t_i) - 0\|_{\mathcal{A}^i_h} = 2\beta_i^m \|u_h(t_i)\|_{\mathcal{A}_h^i},
\end{aligned}
\end{equation}
where the contraction factor is $$\beta_i=\frac{\sqrt{\kappa(\mathcal{B}_h\mathcal{A}_h^i)} - 1}{\sqrt{\kappa(\mathcal{B}_h\mathcal{A}_h^i)} + 1}.$$ 
Finally combining 
$u_h^{\rm rbm}=\sum_{i=1}^nc_iu_h^{\rm rbm}(t_i)$ with \eqref{errori} leads to
  \begin{equation*}
  \begin{aligned}
    \|u_h-u_h^{\rm rbm}\|_{\mathcal{A}_h}&= \left\|\sum_{i=1}^k c_i \big(u_h(t_i)-u_h^{\rm rbm}(t_i)\big)\right\|_{\mathcal{A}_h}\\
    &\leq2\sum_{i=1}^n |c_i|\beta_i^m\|u_h(t_i)\|_{\mathcal{A}^i_h}.
  \end{aligned}
  \end{equation*}
The proof is complete.
\end{proof}
The above estimate may be pessimistic in some cases due to the convergence  \eqref{PCGconvergence} is generally not optimal. We refer to  \cite{Hackbusch1994,GrahamHagger1999,XuZhu2008} for sharper convergence rates of the PCG provided the  ``bad'' eigenvalues of $\mathcal{B}_h\mathcal{A}_h^i$ are clustered.

For large-scale problems, one may replace the exact inverse $\mathcal{B}_h=(\mathcal{A}_h+s\mathcal{I}_h)^{-1}$ with any practical preconditioner such as the geometric or algebraic multigrid for $\mathcal{A}_h$. The resulting practical version of the RCGBM turns out to still work pretty well. Our analysis shows that the error of the RCGBM converges exponentially with respect to the number $m$ of  the reduced basis vectors used in the offline space $\mathcal{H}_h^m$.

To further simplify the upper bound in \eqref{RBMconvergence}, we apply Theorem \ref{errorRCGBM} with $s=0$ to the fractional diffusion model \eqref{fractionalPDE}. It follows from the Poincar\'e inequality $\|v_h\|\leq C_P\|v_h\|_{\mathcal{A}_h}$ that
\begin{equation*}
    (\mathcal{A}_hv_h,v_h)\leq(\mathcal{A}_h^iv_h,v_h)\leq(1+t_iC_P^2)(\mathcal{A}_hv_h,v_h).
\end{equation*}
As a result, we obtain the bound for the condition number 
\begin{equation*}
   \kappa(\mathcal{B}_h\mathcal{A}_h^i)\leq\kappa_i:=(1+t_iC_P^2).
\end{equation*}
Combining this estimate with the elliptic stability $\|u_h(t_i)\|_{\mathcal{A}_h^i}\leq C_\Omega\|f\|_{H^{-1}(\Omega)}$ yields
  \begin{equation}
    \|u_h - u_h^{{\rm rbm}}\|_{\mathcal{A}_h} \leq C_\Omega\sum_{i=1}^n|c_i| \left(\frac{\sqrt{\kappa_i} - 1}{\sqrt{\kappa_i} + 1}\right)^m\|f\|_{H^{-1}(\Omega)}.
  \end{equation}

\section{Numerical Experiments}\label{sec:num} 
To show the efficiency of the proposed method, we present numerical experiments for the fractional diffusion $$\mathcal{A}^{\frac{1}{2}}u=f$$ 
on a ``domain" $\Omega$ that will be specified later. As mentioned in Section \ref{sec:oga}, in each experiment we shall devise an upper bound $\Lambda\geq\lambda_{\max}$ for the maximum eigenvalue $\lambda_{\max}$ of the discrete operator $\mathcal{A}_h$ and practically solve the rescaled discrete problem 
\begin{equation}\label{rescaled}
    (\mathcal{A}_h/\Lambda)^{\frac{1}{2}}u_h=f_h/\Lambda^\frac{1}{2}.
\end{equation}
To implement the OGA, we need to use a dictionary $\mathcal{D}(T)$ in \eqref{dictionary} with a finite discrete set $T$. It is noted that the OGA theoretically reduces the $L_2$ approximation error and does not control the max norm error well near the left boundary $\varepsilon\ll1$. This phenomenon is partly due to the fact that the target function $z^{-s}$ goes to infinity at the origin.  To resolve the singularity, we partition $[\varepsilon,1]$ by a non-uniform mesh that is graded towards 0. Meanwhile, members of the set $T$ are also graded toward 0. Moreover, we use Algorithm \ref{a:OGA} with  $\varepsilon=10^{-8}$ to uniformly approximates $z^{-s}$ 
on $[10^{-6},1]$, where the \texttt{MATLAB} or \texttt{GNU Octave} code is listed in the appendix. The residues $c_i$ and poles $-t_i$ of the rational approximant used in the RCGBM are also shown in Table \ref{tab:OGArespol} in the appendix.

\subsection{Fractional Laplacian on the Cube}\label{subsec:cube}
The first experiment is concerned with the unit cube $\Omega=[0,1]^3$ and the negative Laplacian $\mathcal{A}=-\Delta$ on $\Omega$. We take the homogeneous Dirichlet boundary condition and use 
\begin{align*}
  u(x,y,z)=\sin(\pi x)\sin(\pi y)\sin(\pi z)
\end{align*}
as the exact solution. By the spectral definition of fractional Laplacian and the relation $-\Delta u=3\pi^2u$, the corresponding right hand side is
\[
f=(-\Delta)^{\frac{1}{2}}u=(3\pi^2)^\frac{1}{2}u.
\]

We start with the partition $\mathcal{T}_0$ of $\Omega$ by six tetrahedra and continue constructing triangulation $\mathcal{T}_\ell$ with successive regular refinement in the sense of \cite{Bey2000}, that is, dividing each tetrahedral element into eight sub-elements. The fully faithful rational approximation solution $u_h=\sum_{i=1}^nc_iu_h(t_i)$ for $(-\Delta)^\frac{1}{2}u=f$ is computed in the continuous and piecewise linear finite-element space $\mathcal{H}_h$ on $\mathcal{T}_h=\mathcal{T}_L$. In Algorithm \ref{a:RCGBMmatrix}, the preconditioner $\mathbb{B}$ is a classical V-cycle multigrid based on the nested grid hierarchy $\{\mathcal{T}_\ell\}_{0\leq\ell\leq L}$. The RCGBM(0,1,$m$) is then applied to the rescaled problem \eqref{rescaled},
where $\Lambda=20/h^2$ is used as an upper bound of  $\lambda_{\max}=\mathcal{O}(h^{-2})$ with $h^3$ being the volume of the tetrahedron in the current mesh.
Numerical results are presented in Figure \ref{fig:errorCube} and Table \ref{tab:errorCube} with $\|\cdot\|=\|\cdot\|_{L_2(\Omega)}$ and $NV$ the number of grid vertices in $\mathcal{T}_h$. 

It is observed from Figure \ref{fig:errorCube} that as the number $m$ of conjugate gradient basis grows, the error $\|u_h(t_i)-u^{{\rm rbm}}_h(t_i)\|_{L_2(\Omega)}$ is decreasing. Moreover, a small $5\leq m\leq 10$ suffices to control the reduces basis error within reasonable accuracy. Table \ref{tab:errorCube} shows that the overall $L_2$ accuracy of the reduced basis solution $u_h^{{\rm rbm}}$ and the high-fidelity $u_h$ are almost the same, justifying the significant efficiency of the RCGBM. We remark that the RCGBM only needs to call the multigrid once while the fully faithful rational finite element method has to call the multigrid preconditioner 20 times. The computational cost of the latter still remains high for the largest 3d problem with 2146689 grid vertices, although the multigrid is known as the optimal solver for elliptic problems.

In addition, we also compute the $H^1$ error between the nodal interpolant $u_I$ (of $u$) and $u_h^{{\rm rbm}}, u_h$. It is not surprising to observe that $\|\nabla(u_I-u_h)\|$ exhibits second-order superconvergence property, see, e.g., \cite{BankXu2003a,ChenLong2006,Li2018SINUM} for recovery-type global superconvergence of finite elements for standard elliptic equations. An interesting observation is that the RCGBM solution is able to preserve such superconvergence, which indicates that a postprocessing procedure $R_h$, e.g., the gradient recovery (cf.~\cite{BankXu2003a,ZhangNaga2005,HuangYi2010,BankLi2019}), would lead to second-order superconvergence of $\|\nabla u - R_h\nabla u_h^{{\rm rbm}}\|$ for the linear element. 
  \begin{figure}[H]
    \centering
\includegraphics[width=10cm,height=6cm]{./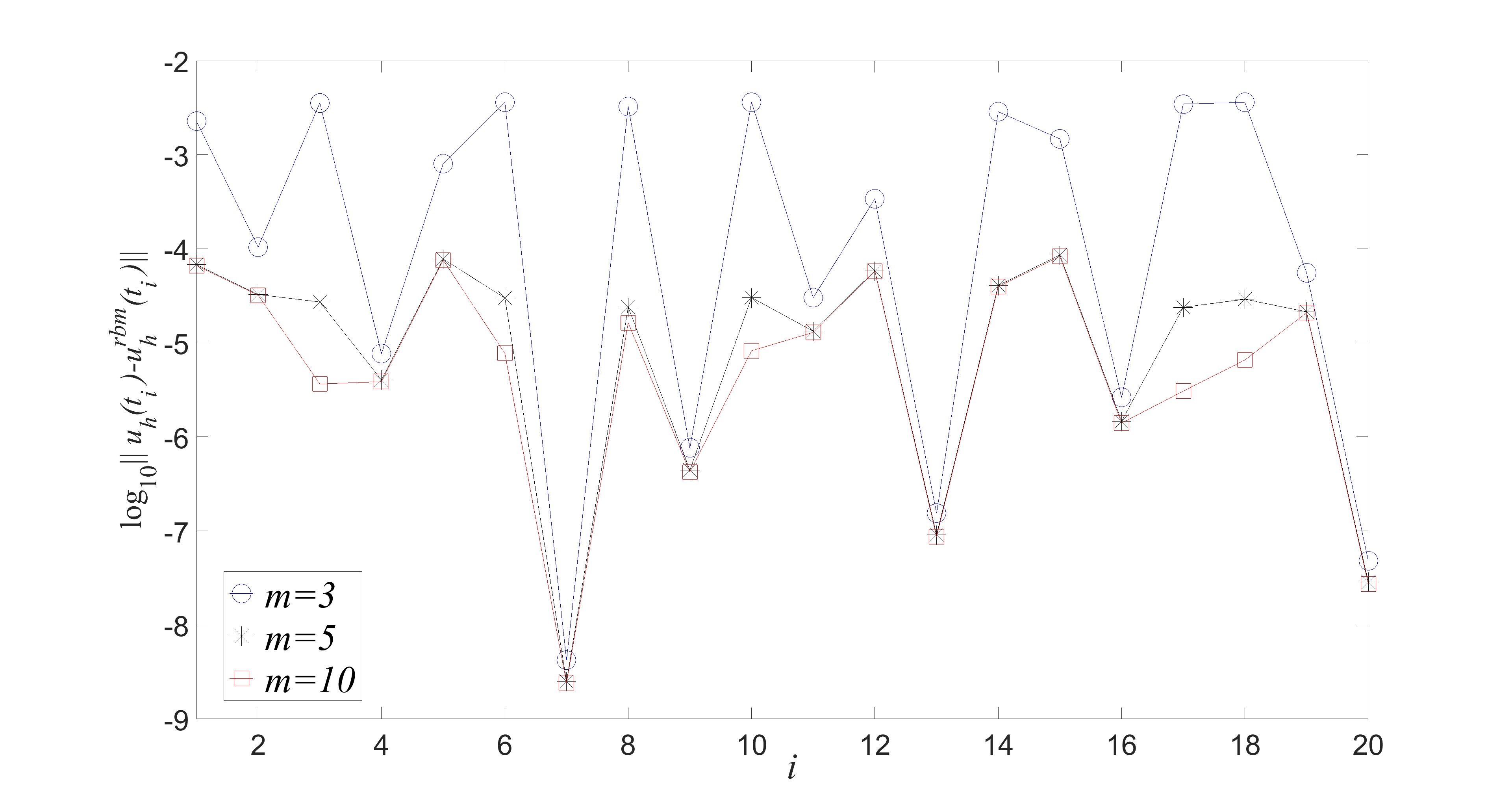}
    \caption{RCGBM error at $t=t_i$ with 2146689 grid vertices}
    \label{fig:errorCube}
  \end{figure}
\begin{table}[H]
  \centering
  \begin{tabular}{ |r||p{1.7cm}|p{1.5cm}| p{2.3cm}|p{2.0cm}| }
    \hline
    \hline
    $NV$ & $\|u-u_h^{\rm rbm}\|$ & $\|u-u_h\|$ & $\|\nabla(u_I-u_h^{\rm rbm})\|$ & $\|\nabla(u_I-u_h)\|$  \\
    \hline
    4913     & 4.3253e-3 & 4.3032e-3  &~~2.4902e-3 &~~2.5969e-3\\
    35937     & 1.0825e-3 &  1.0772e-3 &~~6.2538e-4 &~~7.4115e-4\\
    274625     & 2.7065e-4 &  2.6941e-4 &~~1.5831e-4 &~~2.0038e-4\\
    2146689     & 6.7759e-5 & 6.7460e-5  &~~4.0662e-5 &~~5.2585e-5\\
    \hline
   \end{tabular}
   \caption{Errors of the RCGBM with $m=10$ and full rational approximation on the cube.\label{tab:errorCube}}
   \end{table}
  
\subsection{Fractional Laplace-Beltrami Equation}\label{subsec:surface}
In the second example, we set $\mathcal{A}$ to be the negative Laplace-Beltrami operator $-\Delta_{\Omega}$, i.e., the surface Lapalcian, on the 2d-sphere 
\[\Omega=\big\{(x,y,z)\in\mathbb{R}^3: x^2+y^2+z^2=1\big\}.
\]
We use the eigenfunction of $-\Delta_\Omega$ as the exact solution 
$$u(x,y,z)=x+2y+3z,$$
and the corresponding right hand side is
$$f=(-\Delta_\Omega)^\frac{1}{2}u=2^\frac{1}{2}u.$$
There is no boundary condition for the problem defined $\Omega$ and the uniqueness of the solution is guaranteed by zero mean $\int_\Omega udS=0$.
The initial mesh  for $\Omega$ is an inscribed octahedron, which is further refined by the uniform red refinement. In the refinement process, the newly added nodes are normalized to be aligned with the unit sphere. When solving $(-\Delta_\Omega)^\frac{1}{2}u=f$, we employ the linear nodal element space $\mathcal{H}_h$ on the triangulated surface, see, e.g., \cite{Dziuk1988}.

For general surfaces, it is inconvenient to construct grid hierarchy and multigrid based on algebraic coarsening is preferred in such cases (cf.~\cite{Li2021SISC,Li2023FoCM}). To show the applicability of our RCGBM on general surfaces, we use the classical algebraic multigrid with W-cycles as the preconditioner $\mathbb{B}$ in Algorithm \ref{a:RCGBMmatrix}. We test RCGBM(1,2,$m$) on the surface because of the one-dimensional kernel of  
$\mathbb{A}$. The eigenvalue upper bound is set to be $\Lambda=14/h^2$, where $h^2$ is the area of the triangle in the current mesh. The numerical solution $u_h^{{\rm rbm}}$ is shifted to satisfy the zero mean constaint. Numerical results are demonstrated in Figure \ref{fig:errorSurface}
and Table \ref{tab:errorSurface}.

 \begin{figure}[H]
    \centering
\includegraphics[width=10cm,height=6cm]{./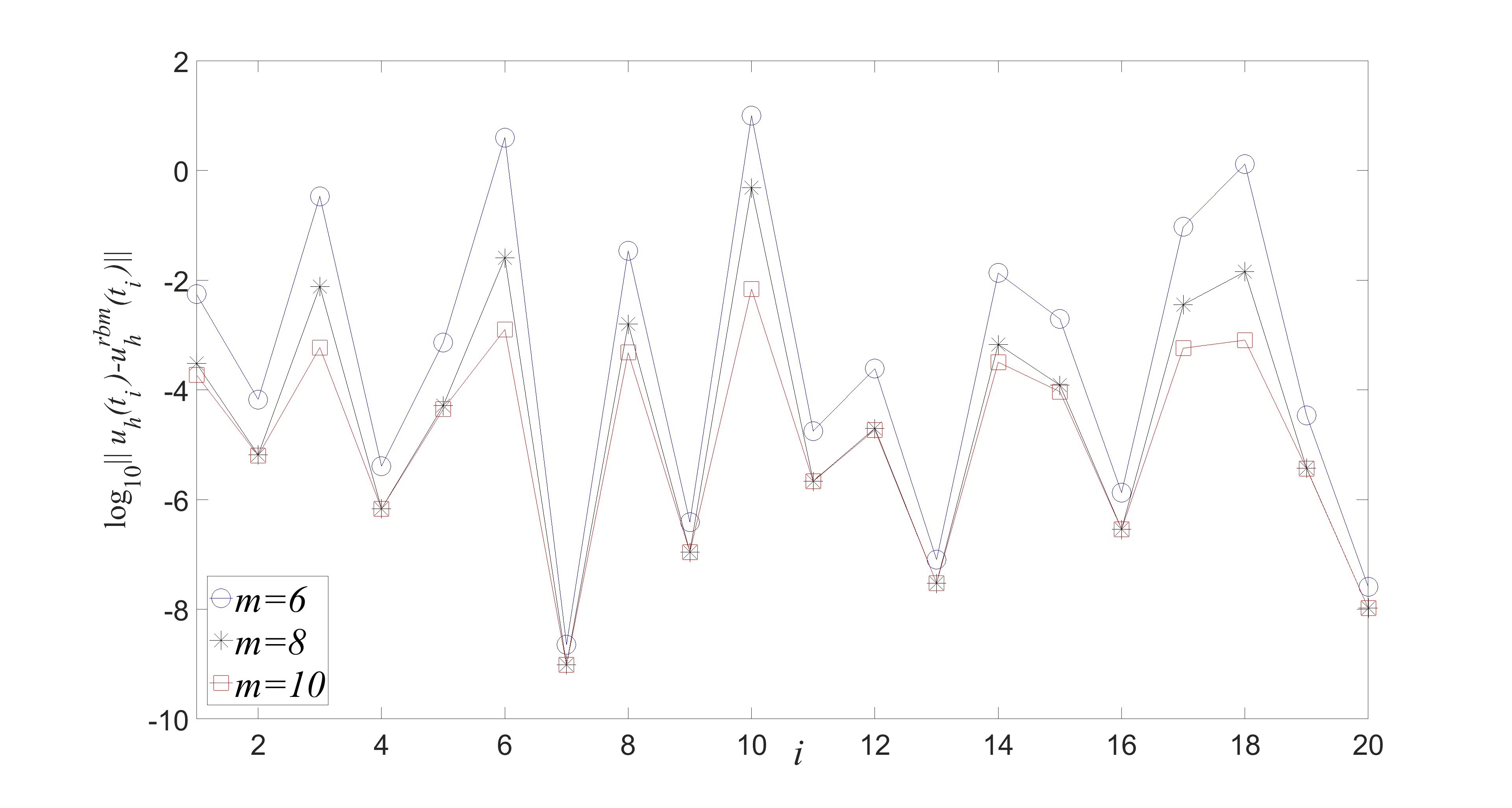}
    \caption{RCGBM error at $t=t_i$ with 1048678 grid vertices}
    \label{fig:errorSurface}
  \end{figure}

\begin{table}[H]
  \centering
  \begin{tabular}{ |r||p{2.8cm}|p{2.8cm}|p{3.0cm}| }
    \hline
    \hline
    $NV$ & $\|u-u_h^{\rm rbm}\|$, $m=6$ & $\|u-u_h^{\rm rbm}\|$, $m=8$ & $\|u-u_h^{\rm rbm}\|$, $m=10$ \\
    \hline
    4098     &~~~~6.3355e-3 &~~~~6.3284e-3  &~~~~~6.3280e-3 \\
    16386     &~~~~1.6530e-3 &~~~~1.5828e-3 &~~~~~1.5827e-3\\
    65538     &~~~~1.3436e-3 &~~~~3.9507e-4 &~~~~~3.9501e-4 \\
    262146     &~~~~1.3546e-3 &~~~~9.7446e-5  &~~~~~9.7029e-5 \\
    1048678     &~~~~1.3261e-3 &~~~~1.2277e-4  &~~~~~6.3104e-6\\
    \hline
   \end{tabular}
   \caption{Errors of the RCGBM with $m=6, 8, 10$ on the sphere.\label{tab:errorSurface}}
   \end{table}
As shown in Figure \ref{fig:errorCube}, the error $\|u_h(t_i)-u^{{\rm rbm}}_h(t_i)\|_{L_2(\Omega)}$ decays as the number $m$ of conjugate gradient basis grows. However, it is observed that $m=6, 8$ is not enough to sufficiently reduce the $\|u_h(t_i)-u^{{\rm rbm}}_h(t_i)\|_{L_2(\Omega)}$ for several poles. The reason is that the AMG is not as accurate as the geometric multigrid used in Subsection \ref{subsec:cube}. The errors in Table \ref{tab:errorCube} show that the overall $L_2$ convergence of the RCGBM solution $u_h^{{\rm rbm}}$ with $m=10$ is quite satisfying, while RCGBM(1,2,6) and RCGBM(1,2,8) stop converging at certain scales, suggesting that a slightly larger $m$ is needed for RCGBMs based on user-friendly preconditioners such as AMGs that are not of very high quality. 

\subsection{Fractional Graph Laplacian}

In the third experiment, we study the performance of the RCGBM on a connected graph $\Omega$ with $n$ vertices. We employ the function  \texttt{sprand} in \texttt{MATLAB} with density $5/n$ to generate a sparse and positive semi-definite random matrix $\mathbb{L}$, which corresponds to the graph Laplacian of a graph with  randomly assigned weights. Let $\mathbb{I}$ be the identity matrix, $\mathcal{A}_h=\mathbb{A} = \mathbb{L} + \mathbb{I}/n$, and $f_h=\mathbf{f}$ be generated by \texttt{randn}. The RCGBM(0,1,$m$) with $\mathbb{B}=\mathbb{A}^{-1}$ is applied to the rescaled problem \eqref{rescaled} with $\Lambda=\|\mathbb{A}\|_{\infty}$.
Numerical results are exhibited in Figure \ref{fig:errorGL} and Table \ref{tab:errorGL}.

In this example, we do not know the exact solution $\mathbb{A}^{-\frac{1}{2}}\mathbf{f}$ and thus only compute errors between the high-fidelity solution $\mathbf{u}(t_i)$ and the reduced basis solution $\mathbf{u}^{\rm rbm}(t_i)$ measured by the $L_2$ norm $\|\cdot\|$ supported at graph vertices.  The relative error $\|\mathbf{u}(t_i)-\mathbf{u}^{{\rm rbm}}(t_i)\|/\|\mathbf{u}(t_i)\|$ still diminishes as the number of reduced basis elements increases, see Figure \ref{fig:errorGL}. The  results in Table \ref{tab:errorGL}  demonstrate that the RCGBM remains stable regardless of the problem's scale, and employing $m = 8$ reduced basis yields a satisfactory approximation.
% rather than simply increasing the number of reduced basis with a single preconditioner. 
  \begin{figure}[H]
    \centering
\includegraphics[width=10cm,height=6cm]{./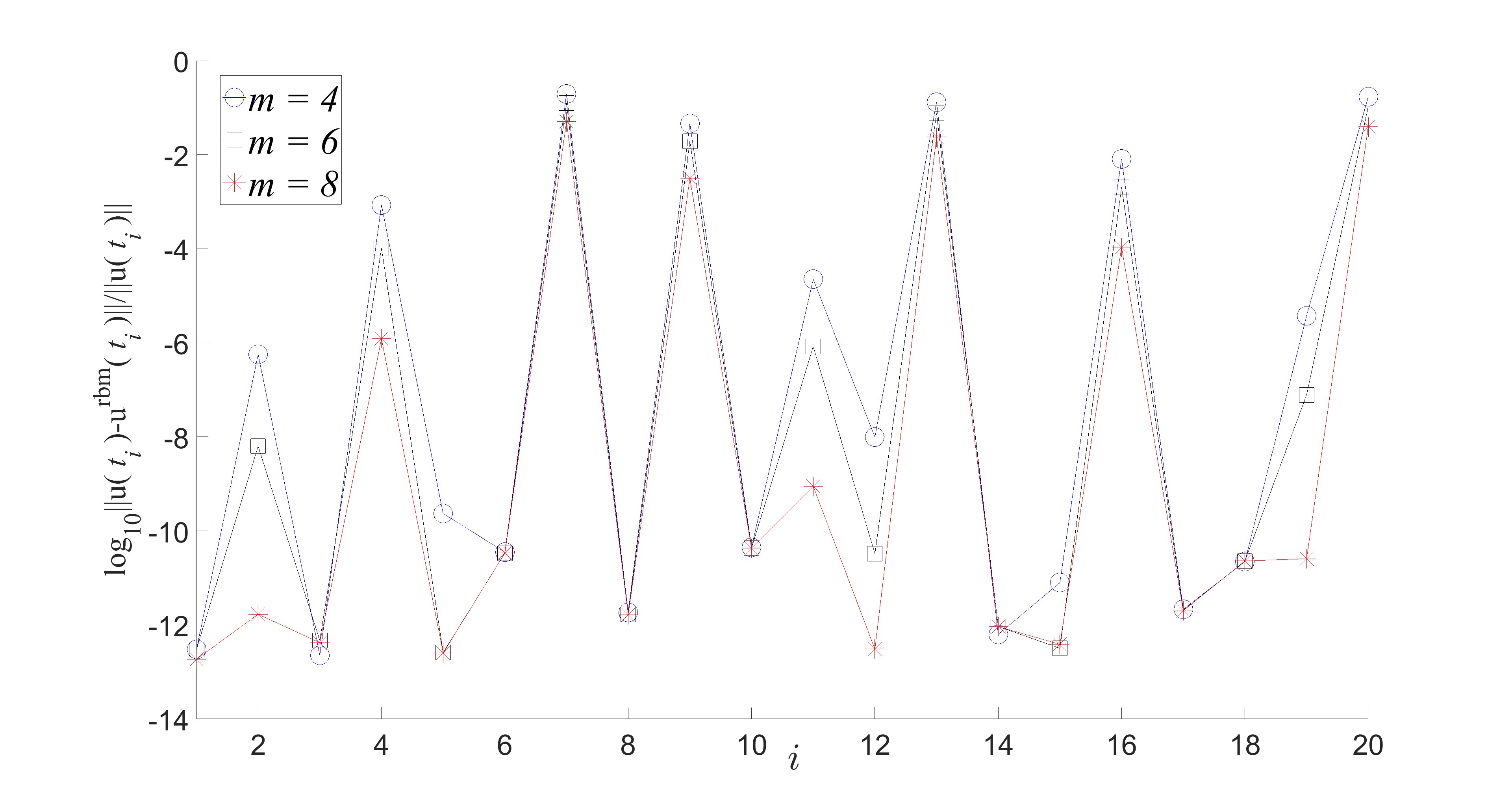}
    \caption{RCGBM relative errors at $t=t_i$ with $n = 262144$}
    \label{fig:errorGL}
  \end{figure}
  
% \begin{table}[H]
%   \centering
%   \begin{tabular}{ |p{1.3cm}||p{1.5cm}|p{1.5cm}| p{2.0cm}| }
%     \hline 
%       $u_h^r-u_h^{r,\text{app}}$    \\
%     \hline
%     $n$ & $m = 3$ & $m = 6$ & $m = 9$ \\
%     \hline
%     4096 & 2.6078e-2 & 3.8442e-3 & 1.1522e-3 \\
%     16384 & 5.2934e-2 & 1.4380e-2 & 2.4482e-3 \\
%     65536 & 2.1039e-2 & 6.1610e-3 & 1.9272e-3 \\
%     262144 & 2.2246e-2 & 7.5138e-3 & 2.5929e-3 \\
%     \hline
%    \end{tabular}
%    \caption{Relative errors of the RCGBM with m = 3, 6, 9}
%    \label{tab:errorGL}
% \end{table}

\begin{table}[H]
  \centering
  \begin{tabular}{ |r||p{1.5cm}|p{1.5cm}| p{1.5cm}| }
    \hline 
    \multicolumn{4}{|c|}{\rule{0pt}{10pt}$\|\mathbf{u}-\mathbf{u}^{{\rm rbm}}\|/\|\mathbf{u}\|$} \\
    \hline
    $n$ & $m = 4$ & $m = 6$ & $m = 8$ \\
    \hline
    4096 & 2.6078e-2 & 7.2151e-3 & 3.8439e-3 \\
    16384 & 5.2934e-2 & 1.4381e-2 & 7.8110e-3 \\
    65536 & 2.1039e-2 & 6.1611e-3 & 3.3951e-3 \\
    262144 & 2.2246e-2 & 1.2911e-2 & 4.3708e-3 \\
    \hline
   \end{tabular}
   \caption{Relative errors of the RCGBM with m = 4, 6, 8}
   \label{tab:errorGL}
\end{table}

For the graph Laplacian, the error of the RCGBM is relatively large  for poles of magnitude $\mathcal{O}(1)$. In view of Theorem \ref{errorRCGBM}, the reason is that $\kappa\big(\mathbb{B}(\mathbb{A}+t_i\mathbb{I})\big)$ is not well bounded for $t_i=\mathcal{O}(1)$. To improve the accuracy in such cases, we use the RCGBM(0,1,$m$) with $\mathbb{B}=\mathbb{A}^{-1}$ to solve the linear systems corresponding to the first 10 smallest $t_i$ and employ another RCGBM(0,1,$m$) with the preconditioner $\mathbb{B}=(\mathbb{A} + t_{\max}\mathbb{I})^{-1}$ to solve the rest of 10 large-scale systems, where $t_{\max}$ is the $t_i$ of the largest modulus. It can be observed from Figure \ref{fig:errorGL_2preconds} and Table \ref{tab:errorGL_2preconds} that this strategy significantly decreases the error of the RCGBM.

  \begin{figure}[H]
    \centering
\includegraphics[width=10cm,height=6cm]{./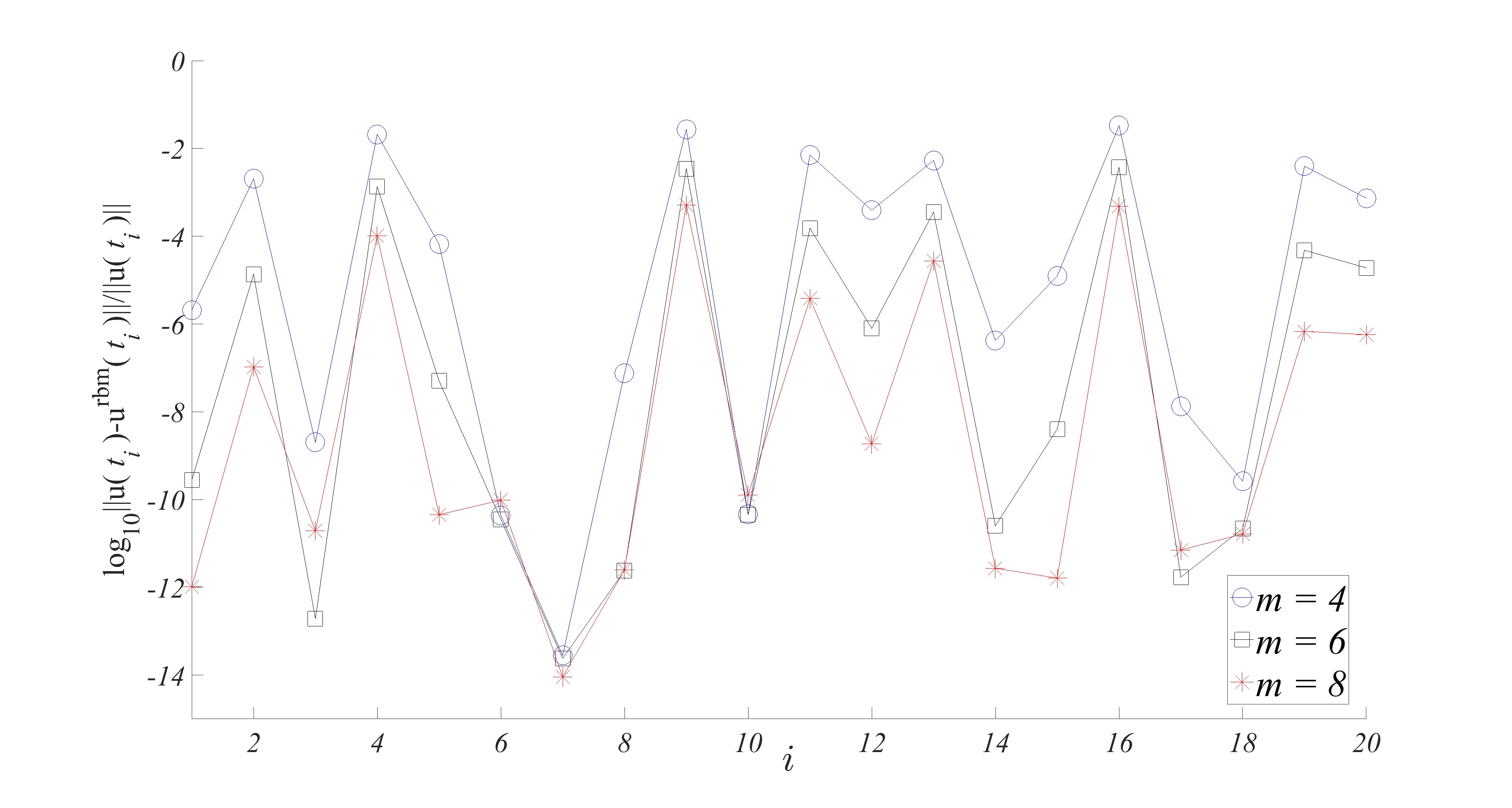}
    \caption{RCGBM relative errors at $t=t_i$ with $n = 262144$ and two preconditioners}
    \label{fig:errorGL_2preconds}
  \end{figure}

\begin{table}[H]
  \centering
  \begin{tabular}{ |r||p{1.5cm}|p{1.5cm}| p{1.5cm}| }
    \hline 
    \multicolumn{4}{|c|}{\rule{0pt}{10pt}$\|\mathbf{u}-\mathbf{u}^{{\rm rbm}}\|/\|\mathbf{u}\|$} \\
    \hline
    $n$ & $m = 4$ & $m = 6$ & $m = 8$ \\
    \hline
    4096 & 6.5604e-3 & 5.4840e-4 & 5.1519e-5 \\
    16384 & 1.2867e-2 & 1.1082e-3 & 1.1291e-4 \\
    65536 & 4.9327e-3 & 4.3909e-4 & 4.8909e-5 \\
    262144 & 4.7487e-3 & 4.8781e-4 & 6.3574e-5 \\
    \hline
   \end{tabular}
   \caption{Relative errors of the RCGBM with m = 4, 6, 8 and two preconditioners}
   \label{tab:errorGL_2preconds}
\end{table}

\section{Conclusions}\label{sec:con}
We have designed an OGA for rational approximation of fractional power functions and an efficient RCGBM for approximately solving discrete fractional diffusion problems. It is straightforward to extend the  RCGBM proposed here to more general coercive elliptic PDEs such as a family of second order elliptic PDEs with affinely parametrized diffusion coefficients. Along the same line, the design of accurate reduced basis methods based on Krylov space methods for indefinite and non-symmetric parameter-dependent problems is a subject of current and future research.

\section*{Acknowledgements}
The authors wish to thank Professor Jinchao Xu for the discussion on reduced basis methods and applications of the PCG in Hilbert spaces. Li  would also like to thank Aidi Li for an earlier version of the OGA code.

\appendix
\section{The OGA and numerical PDE codes}
In the appendix, we present the  \texttt{MATLAB}/\texttt{GNU Octave} code of the orthogonal greedy algorithm \ref{a:OGA} in Listing 1. This code is used for generating the rational approximant in Table \ref{tab:OGArespol}, which contains the residues $c_i$ and poles $-t_i$ used in the RCGBM($s,d,m$) in Section \ref{sec:num}. The finite element codes for solving fractional diffusion are posted in  \href{https://github.com/yuwenli925/RCGBM}{github.com/yuwenli925/RCGBM}. 

\begin{table}[H]
  \centering
  \begin{tabular}{ |p{0.4cm}||p{3.1cm}|p{3.1cm}| }
    \hline
    \hline
    $i$  & ~~~~~~~~~~~$c_i$ & ~~~~~~~~~~~~$t_i$  \\
    \hline
    1     & 0.001163181622200
   &  0.000018062500000 \\
    2     & 0.013218620357233
     &  0.000933302500000 \\
    3     & 0.000255953763580
    &   0.000000562500000\\
    4     & 0.046695455564245
    &  0.013156090000000 \\
    5     & 0.002833475749545
    &  0.000104040000000 \\
    6     & 0.000144842262876
    &  0.000000062500000 \\
    7     & 7.333904150551077
    &  25.000000000000000 \\
    8     & 0.000535823283956
    &  0.000003422500000 \\
    9     & 0.176401964217136
    &  0.136641122500000 \\
    10     & 0.000085661432787
    &  0.000000002500000 \\
    11     & 0.026987066006317
    &  0.003271840000000 \\
    12     & 0.005823374267035
    &  0.000277222500000 \\
    13     & 0.418860226358461
    &  0.684011702500000 \\
    14     & 0.000717641159456
    &  0.000008122500000 \\
    15     & 0.001870543346405
    &  0.000044222500000 \\
    16     & 0.063983537529953
    &     0.039243610000000 \\
    17     & 0.000318817071900
    & 0.000001440000000\\
    18     & 0.000126652196126
    &  0.000000202500000 \\
    19     & -0.002953457313354
    &  0.001759802500000 \\
    20     & 0.494928246538564
    &  2.185962250000000 \\
    \hline
  \end{tabular}
    \caption{Residues and poles of the OGA rational approximant on $[10^{-6},1]$.\label{tab:OGArespol}}
  \end{table}

\begin{lstlisting}[caption={\rm OGA code}]
function [res, pol, err] = OGA(s,iter)
%approximate z.^s on [1e-6,1] with iter OGA iterations
f = @(z) z.^(-s); epsi = 1e-8; 
node = unique([epsi:(0.001-b1)/2000:0.001,...
 0.001:(0.01-0.001)/2000:0.01,0.01:(1-0.01)/3000:1]');
h = node(2:end) - node(1:end-1); %graded mesh on [epsilon,1]
c = [1/2-sqrt(15)/10 1/2 1/2+sqrt(15)/10]; %Gauss quadrature
qpt = [node(1:end-1)+c(1)*h;node(1:end-1)+c(2)*h;...
node(1:end-1)+c(3)*h];
hd = 5e-5;t = (hd:hd:5)';t = t.^2;nd = length(t); %discrete dictionary
normg = sqrt(1./(epsi+t)-1./(1+t)); %L2 norm of dictionary elements
g = 1./(repmat(qpt,1,nd)+t')./normg'; %normalizing dictionary
fqpt = f(qpt); r = fqpt;
A = zeros(iter,iter); rhs = zeros(iter,1);
id = zeros(iter,1); argmax = zeros(nd,1);
for i = 1:iter
    for j = 1:nd
       argmax(j) = product(g(:,j),r,h);
    end
    [~,id(i)] = max(abs(argmax));
    for j = 1:i
        A(j,i) = product(g(:,id(j)),g(:,id(i)),h);
        A(i,j) = A(j,i);
    end
    rhs(i) = product(g(:,id(i)),fqpt,h);
    C = lsqminnorm(A(1:i,1:i),rhs(1:i));
    r = fqpt - g(:,id(1:i))*C;
    fprintf("Step \%d\n",i);
end
res = C./normg(id); %residues of the approximant
pol = t(id); %(-1)*poles of the approximant
maxnode = linspace(1e-6,1,5000000)';
rf = zeros(length(maxnode),1);
for j=1:iter
    rf = rf + res(j)./(maxnode+pol(j));
end
err = max(abs(rf - f(maxnode))); %max norm error on [1e-6,1]
function z = product(f1,f2,h)
z = 5/18*sum( h.*f1(1:3:end).*f2(1:3:end) )+...
    4/9*sum( h.*f1(2:3:end).*f2(2:3:end) )+...
    5/18*sum( h.*f1(3:3:end).*f2(3:3:end) );
end
end
\end{lstlisting}

\bibliographystyle{siamplain}

\end{document}